\newcommand{\pf}{\noindent\textbf{Proof}.\quad}
\newtheorem{definition}{Definition}[section]
\newtheorem{theorem}[definition]{Theorem}
\newtheorem{lemma}[definition]{Lemma}
\newtheorem{claim}[definition]{Claim}
\newtheorem{corollary}[definition]{Corollary}
\title{\bf Improved bounds on the Ramsey number of fans}
\author{Guantao Chen$^a$\thanks{Email: gchen@gsu.edu, partially supported by National Science Foundation grant DMS-1855716.}, \quad Xiaowei Yu$^{b}$\thanks{Email: xwyu@jsnu.edu.cn, partially supported  by the National Natural Science Foundation of China grants 11901252, 11871311, 12031018 and
the Scientific Research Foundation of Jiangsu Province grant 19KJB110010.},\quad Yi Zhao$^{a}$\thanks{Email: yzhao6@gsu.edu, partially supported by NSF grant DMS-1700622 and Simons Collaboration Grant 710094.}
\unskip\\[.5em]
{\small  $^a$ Department of Mathematics and Statistics, Georgia State University,}\\
{\small  Atlanta, GA 30303, USA}\\
{\small  $^b$ Department of Mathematics and Statistics, Jiangsu Normal University,}\\
{\small  Xuzhou,  221116, P. R. China}\\
}
\date{}
\begin{document}

\maketitle

\begin{abstract}
For a given graph $H$, the Ramsey number $r(H)$ is the minimum $N$ such that any 2-edge-coloring of the complete graph $K_N$ yields a
monochromatic copy of $H$. Given a positive integer $n$,  a \emph{fan }$F_n$ is a graph formed by $n$ triangles that share one common vertex. We show that ${9n}/{2}-5\le r(F_n)\le {11n}/{2} + 6$  for any $n$. This improves previous best bounds $r(F_n) \le 6n$ of Lin and Li and $r(F_n) \ge 4n+2$ of Zhang, Broersma and Chen.
\end{abstract}

\bigskip

\noindent {\bf Keywords:} Ramsey numbers; fans; books.

\section{Introduction}

Let $H_1$ and $H_2$ be two graphs. The \emph{Ramsey number} $r(H_1,H_2)$ is the minimum $N$ such that any red-blue coloring of the edges of the complete graph $K_N$ yields a red copy of $H_1$ or a blue copy of $H_2$.
Let $r(H) = r(H, H)$ be the diagonal Ramsey number.
Graph Ramsey theory is a central topic in graph theory and combinatorics.
For related results, see surveys~\cite{Conlonfox2020, Radziszowski2017}.

In 1975, Burr, Erd\H{o}s and Spencer \cite{Burr1975} investigated Ramsey numbers for disjoint union of small graphs.
Given a graph $G$ and a positive integer $n$, let $nG$ denote $n$ vertex-disjoint copies of $G$.
It was shown in \cite{Burr1975} that $r(nK_3)=5n$ for $n\ge 2$. A  \emph{book} $B_n$ is the union of $n$ distinct triangles having exactly one edge in common. In 1978, Rousseau and Sheehan \cite{RS1978} showed that the Ramsey number $r(B_n)\le 4n + 2$ for all $n$ and the bound is tight for infinitely many values of $n$ (\emph{e.g.}, when $4n+1$ is a prime power). A more general \emph{book} $B_n^{(k)}$ is the union of  $n$ distinct copies of complete graphs $K_{k+1}$, all sharing a common $K_k$ (thus $B_n = B_n^{(2)}$). Conlon \cite{Conlonadv2019} recently proved that for every $k$, $r(B_n^{(k)})=2^kn+o_k(n)$, answering a question of Erd\H{o}s, Faudree, Rousseau, and Schelp \cite{EFRS1978} and asymptotically confirming a conjecture of Thomason \cite{THOMASON1982}. More recently, Conlon, Fox, and Wigderson \cite{Conlonfoxw2020} provided another proof of Conlon's result.

Inspired by these old and recent results on $r(nK_3)$ and $r(B^{(k)}_n)$, in this paper we study the Ramsey number of fans.
A \emph{fan}  $F_n$ is a union of $n$ triangles sharing exactly one common vertex, named the \emph{center}, and all other vertices are distinct. Therefore, $nK_3$, $F_n$ and $B_n$ are three graphs formed by $n$ triangles that share zero, one, and two common vertices, respectively. Since $nK_3$ has more vertices than $F_n$ and $F_n$ has more vertices and edges than $B_n$, it is reasonable to believe that $r(B_n)\le r(F_n)\le r(n K_3)$ for sufficiently large $n$. We obtain the following bounds for $r(F_n)$ confirming $r(B_n) < r(F_n)$  for sufficiently large $n$.\footnote{These inequalities fail when $n=2$ because $r(B_2)= r(2K_3) = 10$ \cite{Burr1975, RS1978} while $r(F_2)=9$ \cite{LinLi2009}.} 

\begin{theorem}\label{maintheorem}
For every positive integer $n$,
\[{9n}/{2}  -5\le r(F_n)\le {11n}/{2}+6.\]
\end{theorem}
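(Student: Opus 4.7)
The plan has two independent parts: a lower-bound construction and an upper-bound structural argument.

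For the lower bound $r(F_n) \ge 9n/2 - 5$, I would exhibit an explicit 2-edge-coloring of $K_{N-1}$ with $N = \lceil 9n/2\rceil - 5$ in which neither color contains $F_n$. Since a monochromatic $F_n$ centered at $v$ exists if and only if the corresponding monochromatic neighborhood of $v$ admits a matching of size $n$, it suffices to produce a coloring in which, for every vertex $v$ and every color, the induced monochromatic subgraph on that neighborhood has matching number at most $n-1$. A natural candidate is a blow-up of a small base graph: partition the vertex set into a few blocks $A_1, \dots, A_k$ of carefully chosen sizes summing to $\lceil 9n/2\rceil - 6$, color each pair of blocks uniformly in one color, and fix a simple intra-block coloring. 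The constraints $\nu<n$ then reduce to a short list of linear inequalities in the $|A_i|$, and I expect the extremal solution to come from a balanced blow-up of a graph on four or five vertices, with the $-5$ being a rounding artifact.

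For the upper bound, suppose a 2-coloring of $K_N$ with $N = \lceil 11n/2\rceil + 6$ contains no monochromatic $F_n$. Two elementary observations drive the argument. First, for every vertex $v$ and every color $c$, the induced monochromatic subgraph on $N_c(v)$ has matching number at most $n-1$; since $\tau\le 2\nu$, it contains an independent set (in color $c$) of size at least $d_c(v)-2n+2$, i.e., $N_c(v)$ contains an opposite-color clique of that size. Second, $F_n\subset K_{2n+1}$, so no monochromatic clique of order $\ge 2n+1$ may appear. Together these already give $d_c(v)\le 4n-2$ for every $v$ and $c$, and hence the weak bound $r(F_n)\le 8n-3$. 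To sharpen this to $11n/2+6$, I would pick $v_0$ with maximum red degree $d$, fix a largest blue clique $K\subseteq N_R(v_0)$ (of size close to $2n$ in the extremal case), and analyze the partition $\{v_0\}\cup K\cup (N_R(v_0)\setminus K)\cup N_B(v_0)$. The matching-number constraints on $N_R(u)$ and $N_B(u)$ for $u$ ranging over the different blocks translate into rigid restrictions on the cross-edges between the blocks, and the Rousseau--Sheehan bound $r(B_n)\le 4n+2$ can be invoked to rule out dense sub-configurations inside $N_B(v_0)$.

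The principal obstacle is this final structural case analysis: the trivial observations leak a factor of roughly $11/16$, and closing the gap requires exploiting the matching-number restrictions at many centres simultaneously. In particular, the tight instance of the trivial argument forces both a blue $K_{2n}$ inside $N_R(v_0)$ and a red $K_{2n}$ inside $N_B(v_0)$, and one must show that these two cliques cannot coexist without producing a monochromatic matching of size $n$ in some well-chosen vertex's neighborhood. The additive $+6$ in the upper bound and $-5$ in the lower bound should emerge naturally from rounding when the part sizes and extremal cliques are optimized.
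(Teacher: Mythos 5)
Both halves of your plan have genuine gaps. For the lower bound, the blow-up ansatz is too restrictive: a balanced blow-up of a $4$- or $5$-vertex base graph with uniformly colored pairs of blocks cannot reach $9n/2-5$. With so few blocks some block has order well above $n$, and as soon as two such blocks are joined monochromatically (or such a block is joined monochromatically to a block that is internally a clique in that color) you already get a matching of size $n$ inside a monochromatic neighborhood; chasing these constraints collapses the construction back to roughly $4n$ vertices, i.e.\ the old bound. The construction the paper uses is not a coarse blow-up: it consists of three red cliques $V_1,V_2,V_3$ of order $t\approx 3n/2$ together with a \emph{sparse} $\lceil n/2\rceil$-regular bipartite red graph inserted between a half $X_i$ of $V_i$ and a half $Y_{i+1}$ of $V_{i+1}$. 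Then every red degree is $t-1+\lceil n/2\rceil<2n$, while the blue graph is $3$-partite, so every blue neighborhood induces a bipartite blue graph with one side of size $t-\lceil n/2\rceil<n$ and hence no matching of size $n$. The degree-exactly-$\lceil n/2\rceil$ regular ``perturbation'' is the essential new idea, and your proposed search space does not contain it.

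For the upper bound, your two observations are correct and give $d_c(v)\le 4n-2$ (the paper gets the stronger $d_c(v)\le 3n-1$ from $r(nK_2,F_n)=3n$), but everything past that point is a statement of intent rather than an argument, as you acknowledge. The paper's route is concrete and quite different from your sketch: (i) an $F_n$-versus-$K_m$ Ramsey lemma showing $N=4n+m+\lfloor 6n/m\rfloor+1$ suffices, proved by applying the defect form of Tutte's theorem to a largest monochromatic neighborhood --- a small maximum matching forces many odd components, whose representatives form a clique in the opposite color, and a careful matching count among the components then produces the fan; and (ii) a lemma stating that a monochromatic clique $V_0$ of order at least $3n/2+1$, each of whose vertices has at least $n$ neighbors outside $V_0$, forces a monochromatic $F_n$, proved via the defect form of Hall's theorem and an iterative extraction of independent sets $S_{v_i}$ whose neighborhoods cover $V_0$. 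Taking $m=\lceil 3n/2\rceil+1$ in (i) and feeding the resulting clique into (ii) yields $11n/2+6$. Your plan (largest blue clique of order near $2n$ inside $N_R(v_0)$, plus Rousseau--Sheehan for books) is a different and unexecuted case analysis; the claim that a blue $K_{2n}$ in $N_R(v_0)$ and a red $K_{2n}$ in $N_B(v_0)$ ``cannot coexist'' is precisely the hard step and is not supplied.
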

Theorem~\ref{maintheorem} improves previously best known bounds
\begin{align}\label{eq1}
 4n + 2 \le r(F_n) \le 6n.
 \end{align}
Indeed, Li and Rousseau \cite{LiR21996} first studied off-diagonal Ramsey numbers of fans. They showed that $r(F_1,F_n)=4n+1$ for $n\ge 2$ and $4n+1\le r(F_m,F_n)\le 4n+4m-2$ for $n\ge m\ge 1$. Lin and Li \cite{LinLi2009} proved that $r(F_2,F_n)=4n+1$ for $n\ge 2$ and improved the general upper bound as
\begin{align}\label{eq2}
r(F_m,F_n)\le 4n+2m  \quad \mbox{for}\quad n\ge m\ge 2.
 \end{align}
Lin, Li and Dong \cite{LIN20101228} showed that $r(F_m,F_n)=4n+1$ if $n$ is  sufficiently larger than $m$. The latest result for $r(F_m,F_n)$ is due to Zhang, Broersma and Chen \cite{ZBC2015}, who proved that
$r(F_m,F_n)=4n+1$ if $n\ge \max\{(m^2-m)/{2},{11m}/{2}-4\}$.
They also showed that $r(F_n, F_m)\ge 4n + 2$ for $m\le n < (m^2-m)/2$. This and \eqref{eq2} together give \eqref{eq1}.


The lower bound given in Theorem~\ref{maintheorem} is obtained from constructing a regular 3-partite graph with about $3n/2$ vertices in each part such that every vertex has less than $n$ neighbors in one of the other parts. To prove the upper bound given in Theorem~\ref{maintheorem}, we first find a large monochromatic clique in any 2-edge-colored $K_{11n/2 + 6}$ and then use this clique to find the desired copy of $F_n$. This approach is summarized in the following two lemmas.

\begin{lemma}\label{maintheorem2}
Let $m, n, N$ be positive integers such that $N = 4n+m+\left\lfloor\frac{6n}{m}\right\rfloor+1$.
Then every $2$-coloring of $E(K_N)$ yields a monochromatic copy of $F_n$ or $K_m$.
\end{lemma}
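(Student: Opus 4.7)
I would argue by contradiction. Suppose the 2-coloring of $K_N$ has neither a monochromatic $F_n$ nor a monochromatic $K_m$; I aim to derive $N \le 4n + m + \lfloor 6n/m \rfloor$. The proof breaks into three main steps: a sharp vertex-degree bound via Gallai's identity, the extraction of many blue cliques from a maximum monochromatic clique, and a counting argument combining the two.

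\textbf{Degree bound.} A monochromatic $F_n$ centered at $v$ in color $c\in\{R,B\}$ is precisely a matching of size $n$ in the $c$-subgraph on $N_c(v)$. In its absence, the induced $c$-subgraph has matching number at most $n-1$. Gallai's identity $\alpha(H)+\tau(H)=|V(H)|$ combined with $\tau(H)\le 2\nu(H)$ then guarantees a $c$-independent set (i.e., a $\bar c$-clique) in $N_c(v)$ of size at least $d_c(v)-2(n-1)$. Ruling out a monochromatic $K_m$ thus forces $d_c(v)\le m+2n-3$ for every vertex $v$ and both colors. Since $d_R(v)+d_B(v)=N-1$, both degrees lie in the interval $[N-m-2n+2,\, m+2n-3]$.

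\textbf{Exploiting a maximum blue clique.} Let $W$ be a maximum monochromatic clique, WLOG blue, of size $s\le m-1$. By maximality, each $u\in V\setminus W$ has at least one red neighbor in $W$. For each $w\in W$, the red neighborhood $N_R(w)$ is entirely contained in $V\setminus W$ and has size at least $N-m-2n+2$; applying the first step, there is a blue clique $I_w\subseteq V\setminus W$ of size at least $N-m-4n+4 = \lfloor 6n/m\rfloor+5$. A second maximality argument constrains $I_w$: at most one of its vertices can have $w$ as its sole red neighbor in $W$, for otherwise two such vertices together with $W\setminus\{w\}$ would form a blue clique of size $s+1$.

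\textbf{Counting finish and main obstacle.} Aggregating over all $w\in W$, at least $s\bigl(\lfloor 6n/m\rfloor+4\bigr)$ incidences $(w,x)$ arise with $x\in I_w$ and $|N_R(x)\cap W|\ge 2$. Balancing this against the upper bound $\sum_{w\in W}d_R(w)\le s(m+2n-3)$ on the bipartite red degree out of $W$, an averaging argument identifies a vertex $x^*\in V\setminus W$ lying in many $I_w$'s; in particular $x^*$ has several red neighbors in $W$ and is simultaneously blue to many vertices of each corresponding $I_w$. Applying the first step at $x^*$ should then extract either a red matching of size $n$ in $N_R(x^*)$ (giving red $F_n$) or, using the common structure of several $I_{w}$'s together with $W$, a blue clique of size $m$. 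The main obstacle is this last step: to extract precisely the threshold $\lfloor 6n/m\rfloor$, one must carefully bookkeep the distribution of values of $|N_R(x)\cap W|$ across $x\in V\setminus W$ and leverage the structural upper bound $|I_w|\le m-1$ in concert with the distinctness of the cliques $I_w$ for different $w\in W$.
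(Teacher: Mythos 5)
Your first two steps are sound: the bound $d_c(v)\le m+2n-3$ (via $\nu\le n-1$ on $N_c(v)$ and $\tau\le 2\nu$, then forbidding a $\bar c$-clique $K_m$) is correct, as is the extraction of the blue cliques $I_w$ of size at least $\lfloor 6n/m\rfloor+5$ and the observation that at most one vertex of $I_w$ has $w$ as its sole red neighbor in $W$. But the finish has a genuine gap, which you yourself flag. The incidence count you propose is vacuous: comparing $s\bigl(\lfloor 6n/m\rfloor+4\bigr)$ against $\sum_{w\in W}d_R(w)\le s(m+2n-3)$ yields the trivial inequality $\lfloor 6n/m\rfloor+4\le m+2n-3$ and no contradiction. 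Averaging the incidences over the at most $N-s\approx 4n$ vertices of $V\setminus W$ only places some $x^*$ in roughly $\frac{(m-1)(\lfloor 6n/m\rfloor+4)}{4n}\lesssim 2$ of the sets $I_w$ — nowhere near "many" — and it is not clear how membership in two or three blue cliques $I_w$, each of size only about $\lfloor 6n/m\rfloor+5\le m-1$, could be parlayed into either a blue $K_m$ or a red matching of size $n$ inside $N_R(x^*)$. The quantity $\lfloor 6n/m\rfloor$ never enters your argument through a mechanism that could close the proof.

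For comparison, the paper gets the $\lfloor 6n/m\rfloor$ threshold from a structural, not a counting, source: it takes a vertex $w$ with $d_B(w)\ge (N-1)/2$ and applies the defect form of Tutte's theorem to $G=B[N_B(w)]$ (which has no blue matching of size $n$), producing a set $S$ such that $G-S$ has $\ell\ge |S|+\frac{m+c+1}{2}$ components whose pairwise crossing edges are all red. Forbidding a red $K_m$ forces $\ell\le m-1$, and since $d_B(w)\le 3n-1$ (else Corollary~\ref{3n} finishes), the smallest component has fewer than $\frac{3n-1}{(m+c+1)/2}<\frac{6n}{m}$ vertices; the union $D$ of the remaining components then has at least $2n+2$ vertices, is entirely red to a vertex $v_1$ of the smallest component, and a Hall-free greedy/matching argument inside $D$ produces a red matching of size $n$, i.e.\ a red $F_n$ centered at $v_1$. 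The essential idea your proposal is missing is exactly this use of Tutte's theorem to convert "no blue matching of size $n$" into a many-component decomposition with all cross edges red; the Gallai bound $\tau\le 2\nu$ retains only an independent set and discards the component structure that makes the $6n/m$ bound, and the subsequent construction of the red fan, possible.
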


\begin{lemma}\label{fan}
Let $n$ be a positive integer. If a graph $G$  contains  a clique $V_0$ with $|V_0|\ge  3n/2+1$ such that every vertex $v\in V_0$ has at least $n$ neighbors in $V\backslash V_0$, then $G$ or its complement $\overline{G}$ contains a copy of $F_n$ with center in $V_0$.
\end{lemma}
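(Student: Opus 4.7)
I will argue by contradiction, assuming that neither $G$ nor $\overline G$ contains a copy of $F_n$ with center in $V_0$. Write $U = V \setminus V_0$, $m = |V_0|$, and $S_v = N_G(v) \cap U$ for each $v \in V_0$; by hypothesis $|S_v| \ge n$. For each $v \in V_0$, the clique $V_0 \setminus \{v\}$ has size $m-1 \ge 3n/2$ and contains a matching of size $\lfloor (m-1)/2 \rfloor \ge \lfloor 3n/4 \rfloor$; this matching is vertex-disjoint from any matching inside $G[S_v]$, so
\[
\nu(G[N_G(v)]) \;\ge\; \lfloor (m-1)/2 \rfloor + \nu(G[S_v]).
\]
The absence of an $F_n$ in $G$ centered at $v$ therefore forces $\nu(G[S_v]) \le n - 1 - \lfloor (m-1)/2 \rfloor$. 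Applying the Gallai bound $\alpha(H) \ge |V(H)| - 2\nu(H)$ to $H = G[N_G(v)]$ and using that the clique $V_0 \setminus \{v\}$ contributes at most one vertex to any independent set yields an $\overline G$-clique $T_v \subseteq S_v$ of size $\ge m + |S_v| - 2n \ge n/2 + 1$. A symmetric application to the hypothesis that $\overline G$ has no $F_n$ centered at $v$, together with $N_{\overline G}(v) = U \setminus S_v$, produces a $G$-clique inside $U \setminus S_v$ of size $\ge |U| - |S_v| - 2n + 2$.

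To construct $F_n$ in $\overline G$, I would select a center $v' \in V_0$ by a counting argument over $V_0$: for any $v \ne v'$ the subset $T_v \setminus S_{v'}$ is an $\overline G$-clique lying inside $N_{\overline G}(v')$ (because any vertex of $T_v$ outside $S_{v'}$ is $\overline G$-adjacent to $v'$), and it already contributes a matching of $\lfloor |T_v \setminus S_{v'}|/2 \rfloor$ edges toward the desired fan. Applying the structural conclusion of the previous paragraph to $v'$ itself produces a second $\overline G$-clique $T_{v'} \subseteq S_{v'}$; this second clique is automatically disjoint from $T_v \setminus S_{v'}$, and combining its matching with that of $T_v \setminus S_{v'}$, together with whatever $\overline G$-edges run between them and the remaining vertices of $U \setminus S_{v'}$, should yield a matching of size $n$ in $\overline G[N_{\overline G}(v')]$. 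The hypothesis $m \ge 3n/2 + 1$ is precisely what guarantees, through an averaging argument, that a good $v'$ exists.

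The main obstacle is that any single $\overline G$-clique $T_v$ has size only about $n/2$ and alone supports a matching of only about $n/4$, whereas we need matching size $n$. Bridging this gap requires combining the $\overline G$-cliques obtained from two (or more) distinct choices of $v$ while preserving the condition that every matched vertex remains a $\overline G$-neighbor of a common center $v'$; the clique structure of $V_0$, the lower bound on $m$, and possibly the auxiliary $G$-cliques inside $U \setminus S_v$ are what permit the required choice. I expect most of the technical effort to lie in executing this combination cleanly and verifying that the pooled matching reaches exactly $n$ edges.
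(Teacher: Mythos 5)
Your opening reductions are sound: the bound $\nu(G[S_v]) \le n-1-\lfloor (m-1)/2\rfloor$ and the Gallai step producing a $G$-independent set $T_v\subseteq S_v$ of size at least $|S_v|+m-2n\ge n/2+1$ are both correct, and are close in spirit to the paper's first observation. But the plan for assembling the fan in $\overline G$ fails at its central step. You propose to use the $\overline G$-clique $T_{v'}\subseteq S_{v'}$ as part of the matching for a fan centered at $v'$ in $\overline G$. This is impossible: $S_{v'}=N_G(v')\cap U$, so every vertex of $T_{v'}$ is a $G$-neighbor of $v'$ and hence lies outside $N_{\overline G}(v')=U\setminus S_{v'}$ (as you yourself compute). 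The matching for a $\overline G$-fan centered at $v'$ must avoid $S_{v'}$ entirely, so the "second clique" contributes nothing, and a single set $T_v\setminus S_{v'}$ supports a matching of size only about $n/4$ at best.

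The deeper missing ingredient is control over $N_G(T_v)\cap V_0$. Gallai's bound gives a large independent set $T_v\subseteq S_v$, but nothing prevents every vertex of $V_0$ from having a $G$-neighbor in every $T_v$; in that case no center $v'\in V_0$ is $\overline G$-adjacent to the vertices you intend to match, and your proposed averaging over $V_0$ has no quantitative input to exploit. The paper supplies exactly this control: it first observes that any matching in $G[N(v)]$ with $\lfloor n/2\rfloor$ vertices outside $V_0$ extends through the clique $V_0$ to a matching of size $n$ (note this forbids matchings using edges between $S_v$ and $V_0$, which your estimate ignores), and then applies the defect form of Hall's theorem to the bipartite graph between an independent subset of $N(v)\setminus V_0$ and $V_0$ to extract an independent set $S_v$ with $|N(S_v)\cap V_0|\le n/2$ and $|S_v|\ge |N(S_v)\cap V_0|+n/2$. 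A greedy covering of $V_0$ by the neighborhoods $N(S_{v_i})\cap V_0$ then requires $t\ge 4$ sets (each neighborhood has size at most $n/2$ while $|V_0|>3n/2$); the first $t-1$ of them already contain $2n$ vertices partitioned into $\overline G$-cliques, hence a matching of size $n$ in $\overline G$, while leaving some $w\in V_0$ adjacent to none of them to serve as the center. Without an analogue of this neighborhood control and covering argument, your outline cannot be completed.
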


We prove Lemmas~\ref{maintheorem2} and \ref{fan} by using the theorems of Hall and Tutte on matchings along with a result on $r(nK_2, F_m)$ from \cite{LinLi2009}.
Unfortunately our approach (of finding a large monochromatic clique) cannot prove $r(F_n)< 11n/2$  because Lemma~\ref{fan} is tight with respect to the size of $V_0$, see Section~\ref{sec:last} for details.

We organize our paper as follows. We give notation and preliminary results in Section 2. After proving Lemmas~\ref{maintheorem2} and \ref{fan} in Section 3, we complete the proof of Theorem~\ref{maintheorem} in Section 4. We give concluding remarks, including a lower bound for $r(F_n, F_m)$, in the last section.


%
%

\section{Notation and preliminaries}

We start this section with some notation and terminologies.  Given a positive integer $n$, let $[n] := \{1,2,\ldots,n\}$.
All graphs considered are simple and finite. Given a graph $G$, we denote by $V(G)$ and $E(G)$ the vertex and edge sets of $G$, respectively. $|G|:=|V(G)|$ and $|E(G)|$ are the \emph{order} and the \emph{size} of $G$, respectively. Let $\overline{G}$ denote the complement graph of $G$.

Given a graph $G$, let $v$ be a vertex and $H$ be a subgraph.  Denote by $N_H(v)$  the set of neighbors of $v$ in $H$.
For  a subset $S\subseteq V(G)$, define $N_H(S)=\cup_{v\in S}N_H(v)$.  The \emph{degree} of $v$ in $H$ is denoted by $d_H(v)$, that is, $d_H(v)=|N_H(v)|$. When all the vertices of $G$ have the same degree $d$, we call $G$  a \emph{$d$-regular graph}. The subgraph induced by the vertices of $S$ is denoted by $G[S]$. We simply write $G[V(G)\backslash S]$ as $G-S$. 
A component of $G$ is \emph{odd} if it consists of an odd number of vertices. We denote by $o(G)$ the number of odd components of $G$.


Given a graph $G$, we denote by $\nu(G)$ the size of a largest matching of $G$. We will use the following defect versions of Hall's and Tutte's theorems (see, \emph{e.g.}, \cite{Diestel2017}).

\begin{theorem}[Hall] \label{hall}
Let $G$ be a bipartite graph on parts $X$ and $Y$. For any non-negative integer  $d$,
$\nu(G) \ge |X|-d$ if and only if $|N_G(S)|\ge |S|-d$
 for every $S\subseteq X$.
\end{theorem}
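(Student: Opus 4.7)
The plan is to reduce the defect version to the classical Hall marriage theorem via an auxiliary construction. The forward direction (necessity) is essentially by counting: assuming $\nu(G)\ge |X|-d$, fix a matching $M$ of that size and take any $S\subseteq X$. At most $d$ vertices of $S$ can be $M$-unsaturated, so at least $|S|-d$ vertices of $S$ are matched by $M$ into distinct vertices of $N_G(S)$, giving $|N_G(S)|\ge |S|-d$.

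For the harder (sufficiency) direction, I would form a new bipartite graph $G'$ by adding $d$ new vertices $y_1,\ldots,y_d$ to $Y$ and joining each $y_i$ to every vertex of $X$. The point is to verify standard Hall's condition in $G'$: for any nonempty $S\subseteq X$,
\[
|N_{G'}(S)| = |N_G(S)| + d \ge (|S|-d) + d = |S|,
\]
and for $S=\emptyset$ the condition is trivial. By the classical Hall theorem applied to $G'$, there is a matching $M'$ in $G'$ saturating $X$. At most $d$ edges of $M'$ use the auxiliary vertices $y_1,\ldots,y_d$; deleting those edges yields a matching in $G$ of size at least $|X|-d$, so $\nu(G)\ge |X|-d$.

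There is no real obstacle here; the only thing that requires a small amount of care is to check that the augmented graph exactly absorbs the defect $d$ so that Hall's condition holds on every subset, including the case $d=0$ (where $G'=G$ and the statement collapses to the classical theorem). I would state the classical Hall theorem as a black-box ingredient and present the argument in two short paragraphs, one per direction.
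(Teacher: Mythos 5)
Your proof is correct: both directions are the standard argument (counting unsaturated vertices for necessity, and augmenting $Y$ with $d$ universal vertices plus classical Hall for sufficiency). The paper does not prove this statement at all --- it quotes the defect form of Hall's theorem as a known result with a reference to Diestel --- and your argument is exactly the standard textbook proof of that result, so there is nothing to reconcile.
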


\begin{theorem} [Tutte] \label{matching}
Let $G$ be a graph on order $n$.  For any non-negative integer  $d$,  $\nu(G) \ge (n-d)/{2}$ if and only if $o(G-S)\le |S|+d$ for every subset $S$ of $V(G)$.
\end{theorem}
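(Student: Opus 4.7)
The plan is to prove the two directions separately, obtaining the forward implication directly from a maximum matching and reducing the converse to the classical (non-defect) Tutte theorem for perfect matchings.

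For necessity, assume $\nu(G) \ge (n-d)/2$ and fix a maximum matching $M$ leaving at most $n - 2\nu(G) \le d$ vertices unmatched. For any $S \subseteq V(G)$, every odd component $C$ of $G - S$ has odd order, so $M$ cannot match all vertices of $C$ to one another; hence $C$ contains a vertex that is either unmatched by $M$ or matched to a vertex of $S$. Summing over the $o(G-S)$ odd components gives $o(G-S) \le |S| + |V(G) \setminus V(M)| \le |S| + d$.

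For sufficiency, suppose $o(G-S) \le |S| + d$ for every $S \subseteq V(G)$. I would form an auxiliary graph $G^{*}$ by adjoining a clique $U$ of $d^{*}$ new vertices, each made adjacent to every vertex of $V(G)$, where $d^{*} = d$ when $n+d$ is even and $d^{*} = d-1$ when $n+d$ is odd (so $n + d^{*}$ is always even). I would then verify the standard Tutte condition on $G^{*}$. For $T \subseteq V(G^{*})$ with $U \subseteq T$, the graph $G^{*} - T$ equals $G - (T \setminus U)$, and so $o(G^{*}-T) \le |T \setminus U| + d = |T| - d^{*} + d \le |T|$. For $T$ missing some vertex of $U$, the remaining universal vertex forces $G^{*} - T$ to be connected, giving $o(G^{*}-T) \le 1 \le |T|$ whenever $T \ne \emptyset$; the case $T = \emptyset$ yields $o(G^{*}) = 0$ by connectedness of $G^{*}$ when $d^{*} \ge 1$, and by the hypothesis at $S = \emptyset$ when $d^{*} = 0$. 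Applying Tutte's theorem gives a perfect matching of $G^{*}$, and restricting it to $V(G)$ yields a matching of $G$ covering all but at most $d^{*}$ vertices, so $\nu(G) \ge (n - d^{*})/2 \ge (n - d)/2$.

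The main obstacle is the parity issue when $n + d$ is odd: a naive reduction adding $d$ universal vertices produces an odd-order auxiliary graph whose Tutte condition already fails at $T = \emptyset$. This is circumvented by the observation that $o(G-S) - |S| \equiv n \pmod{2}$, which combines with the hypothesis to force the strengthened inequality $o(G - S) \le |S| + d - 1$ when $n + d$ is odd; this legitimizes the choice $d^{*} = d - 1$. The only corner case, $d = 0$ with $n$ odd, is immediately ruled out because the hypothesis at $S = \emptyset$ would require $o(G) \le 0$ while $n$ odd forces $o(G) \ge 1$.
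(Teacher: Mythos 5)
Your proof is correct. Note, however, that the paper does not prove this statement at all: it is quoted as the standard defect (Berge--Tutte) form of Tutte's theorem, with a citation to Diestel's textbook, so there is no proof of record to compare against. Your argument is the classical derivation --- the necessity direction by counting, over the odd components of $G-S$, the vertices left unmatched or matched into $S$ by a maximum matching, and the sufficiency direction by adjoining $d^{*}$ universal vertices and invoking the $1$-factor version of Tutte's theorem. The one place to tighten the write-up: in the verification $o(G^{*}-T)\le |T\setminus U|+d=|T|-d^{*}+d\le |T|$ for $U\subseteq T$, the final inequality requires $d\le d^{*}$, which fails verbatim when $d^{*}=d-1$; in that case you must instead apply the parity-strengthened hypothesis $o(G-S)\le |S|+d-1$, which you correctly establish in your closing paragraph (via $o(G-S)-|S|\equiv n \pmod 2$) but should cite at that step rather than after it. With that rearrangement, and your correct dismissal of the corner case $d=0$ with $n$ odd, the proof is complete.
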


The aforementioned result $r(F_n, F_m)\le 4n + 2m$ for $n\ge m$ follows from the following lemma, in which $n K_2$ is a matching of size $n$.
Note that the $n=m$ case of this lemma was proved in the same way as our Lemma~\ref{maintheorem2}.

\begin{lemma}[Lin and Li~\cite{LinLi2009}]
\label{ramsey}
Let $m,n$ be two positive integers with $n\ge m$. Then $r(nK_2, F_m)=2n+m$.
\end{lemma}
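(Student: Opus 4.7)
My plan is to prove the lower and upper bounds of $r(nK_2,F_m)=2n+m$ separately. For the lower bound, I would exhibit a 2-edge-coloring of $K_{2n+m-1}$ with no red $nK_2$ and no blue $F_m$: partition the vertex set into $A$ with $|A|=2n-1$ and $B$ with $|B|=m$, color every edge inside $A$ red, and color every remaining edge blue. The red graph is $K_{2n-1}$ with matching number $n-1<n$, so there is no red $nK_2$. For blue $F_m$: a candidate center $v\in A$ has blue neighborhood exactly $B$, which contains at most $\lfloor m/2\rfloor<m$ disjoint blue edges; a candidate center $v\in B$ has blue neighborhood $A\cup(B\setminus\{v\})$, but $A$ is independent in blue, so every blue matching edge uses a vertex of $B\setminus\{v\}$, bounding the matching size by $|B\setminus\{v\}|=m-1$.

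For the upper bound I argue by contradiction. Assume a 2-coloring of $K_{2n+m}$ has no red $nK_2$. Let $M$ be a maximum red matching, so $|M|\le n-1$, and let $W$ be the set of vertices unmatched by $M$, so $|W|=2n+m-2|M|\ge m+2$. Maximality of $M$ forces $W$ to span no red edge, hence $W$ is a blue clique. If $|W|\ge 2m+1$, any $v\in W$ has $W\setminus\{v\}$ as a blue clique of size at least $2m$, which contains a blue matching of size $m$, yielding a blue $F_m$ centered at $v$.

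The remaining case $m+2\le |W|\le 2m$ is the main obstacle. Here the key structural input is an augmenting-path consequence of the maximality of $M$: for any pair $a_ib_i\in M$ and any two distinct $u,u'\in W$, we cannot have both $ua_i$ and $u'b_i$ red (otherwise $u,a_i,b_i,u'$ is an $M$-augmenting red path), and symmetrically with the roles of $a_i$ and $b_i$ swapped. Consequently, for each pair $i$, the set of $W$-vertices with a red edge into $\{a_i,b_i\}$ is either contained in the red-neighborhood of a single endpoint of $a_ib_i$, or it consists of a single distinguished vertex $u_i^*\in W$ joined to both $a_i$ and $b_i$ in red. Using this classification, I would select $v\in W$ by a counting argument so that sufficiently many matched vertices are blue-adjacent to $v$, and then build a blue matching of size $m$ in the blue neighborhood of $v$ by combining $\lfloor(|W|-1)/2\rfloor$ edges inside the blue clique $W\setminus\{v\}$ with bipartite blue edges from the leftover $W$-vertices to blue $v$-neighbors in $V(M)$. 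Hall's Theorem~\ref{hall} applied to this bipartite blue graph, with Hall's condition verified via the structural classification above, would yield the remaining matching edges and complete a blue $F_m$ centered at $v$. The main technical difficulty is verifying Hall's condition uniformly over subset sizes on the $W$-side while tracking the contributions of type $A$, $B$, and $C$ pairs; a cleaner alternative finish is to invoke Tutte's Theorem~\ref{matching} directly on the blue graph induced on the blue neighborhood of $v$, using the same augmenting-path analysis to control odd components.
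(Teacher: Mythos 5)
This lemma is quoted from Lin and Li~\cite{LinLi2009}; the paper does not reprove it, so there is no in-paper proof to compare against. The paper only remarks that the diagonal case was proved ``in the same way as our Lemma~\ref{maintheorem2},'' i.e., by applying the defect form of Tutte's theorem to the red graph and assembling the blue fan from edges between components of $R-S$. Your lower-bound construction is correct and complete. Your upper bound, however, contains a genuine gap precisely in the case you yourself flag as ``the main obstacle.''

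Two concrete problems. First, the arithmetic of your final assembly does not work: if you take $\lfloor(|W|-1)/2\rfloor$ blue edges inside the clique $W\setminus\{v\}$, at most one vertex of $W\setminus\{v\}$ is left over, so the resulting matching has size at most $\lfloor(|W|-1)/2\rfloor+1\le |W|/2$, which is strictly less than $m$ throughout the range $m+2\le |W|\le 2m-1$ you are trying to handle. The balance must go the other way: use only about $|W|-1-m$ internal edges and send roughly $2m-|W|+1$ vertices of $W\setminus\{v\}$ to blue neighbors of $v$ in $V(M)$. Second, the actual content of the hard case --- choosing $v$ by averaging over the distinguished vertices $u_i^*$ of your type-$C$ pairs so that at least $2m-|W|+1$ matched edges contribute a blue neighbor of $v$ (this is where $n\ge m$ enters), and then verifying Hall's condition for the bipartite blue graph between those contributed vertices and $W\setminus\{v\}$ --- is announced but not carried out. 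Your augmenting-path classification is correct, and the plan can be completed: each contributed vertex is blue-adjacent to all but at most one vertex of $W$, so Hall's condition reduces to $|W|-2\ge 2m-|W|+1$, which follows from $|W|\ge m+2$. But as written this is an outline of the hard case, not a proof. A cleaner route, and the one Lemma~\ref{maintheorem2} of this paper takes for the analogous statement, is to apply Theorem~\ref{matching} to the red graph directly and work with the components of $R-S$, all of whose cross edges are blue, rather than with a maximum matching and augmenting paths.
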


We will use the following corollary.

\begin{corollary}\label{3n}
Let $G$ be a graph with maximum degree $\Delta(G)$.  If  $\Delta(G) \ge 3n$, then $G$ or $\overline{G}$ contains
a copy of $F_n$.
\end{corollary}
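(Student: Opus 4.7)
The plan is to apply Lemma~\ref{ramsey} with $m=n$, which gives $r(nK_2, F_n) = 3n$, to the neighborhood of a maximum-degree vertex. Concretely, pick a vertex $v$ with $d_G(v) = \Delta(G) \ge 3n$ and let $N \subseteq N_G(v)$ be any set of exactly $3n$ neighbors. View the edges of the complete graph on $N$ as 2-colored by membership in $G$ versus $\overline{G}$.

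By Lemma~\ref{ramsey}, either $G[N]$ contains a matching $M$ of size $n$, or $\overline{G}[N]$ contains a copy of $F_n$. In the first case, since $v$ is adjacent in $G$ to every vertex of $N$, each edge $u_iw_i \in M$ together with $v$ forms a triangle $vu_iw_i$, and these $n$ triangles pairwise share only the vertex $v$; hence they constitute a copy of $F_n$ in $G$ with center $v$. In the second case, the copy of $F_n$ in $\overline{G}[N]$ is already a copy of $F_n$ in $\overline{G}$, and we are done.

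There is essentially no obstacle here: the corollary is a direct packaging of Lemma~\ref{ramsey}. The only small things to verify are that $r(nK_2, F_n)$ is indeed the right instance (the hypothesis $n \ge m$ of Lemma~\ref{ramsey} is satisfied with equality when $m=n$), and that $v \notin N$ so the triangles produced in the first case are honestly of the form required by the definition of $F_n$ (with all non-center vertices distinct). Both are immediate.
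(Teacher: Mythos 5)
Your proof is correct and follows essentially the same route as the paper: apply Lemma~\ref{ramsey} with $m=n$ (so $r(nK_2,F_n)=3n$) inside the neighborhood of a maximum-degree vertex, and convert a matching of size $n$ there into a fan centered at that vertex. The extra care you take (restricting to exactly $3n$ neighbors and checking $v\notin N$) is fine but not needed beyond what the paper already does.
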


\pf
Assume $v$ is a vertex such that $d_G(v)\ge 3n$.  By Lemma \ref{ramsey}, there is a copy of $nK_2$ in $G[N_G(v)]$ or a copy of $F_n$ in $\overline{G}[N_G(v)]$. So, $G$ has a copy of $F_n$ centered at $v$ or $\overline{G}$ contains a copy of $F_n$. \qed

\section{Proofs of Lemmas \ref{maintheorem2} and \ref{fan}}

\noindent {\bf Proof of Lemma \ref{maintheorem2}.}
Let  $c:=\left\lfloor\frac{6n}{m}\right\rfloor+1$ for convenience, and so $N=4n+m+c$.  Fix a red-blue edge coloring of $K_N$ and let
$R,B$ be the graphs induced by red and blue edges, respectively.
Assuming  there is no monochromatic $K_m$, we will find a monochromatic $F_n$.

Fix a vertex $w$. Assume, without loss of generality, that  $d_B(w)\ge \frac{N-1}{2}= 2n+\frac{m+c-1}{2}$.  Let $G:=B[N_B(w)]$.  If $\nu(G) \ge n$, we get a blue $F_n$ with center $w$. So, we assume $\nu(G) \le n-1$.  Applying Theorem \ref{matching} with $d:= d_B(w)-2n\ge \frac{m+c-1}{2}$, we get a subset $S\subseteq N_B(w)$ such that $o(G-S) \ge |S| + d  +1 \ge |S| + \frac{m+c+1}{2}$.

Let $C_1,C_2,\ldots,C_{\ell}$ be the vertex sets of  the components of $G-S$.  We have the following observations.
\begin{itemize}
\item [(a)] $\ell \ge o(G-S) \ge |S| + \frac{m+c+1}{2}$. 
\item [(b)] For any distinct $i,j\in [\ell]$, all edges between $C_i$ and $C_j$ are red.
\end{itemize}

 We further assume that $|C_1|:=\min\{|C_i|: i\in [\ell]\}$ and let $D=\cup_{i=2}^{\ell} C_i$.  By (b), $\overline{G}$ contains a red $K_{\ell}$, which in turn shows $\ell \le m -1$.

If  $d_B(w) \ge  3n$,  then by Corollary \ref{3n},  $N_B(w)$ spans  a blue $nK_2$ or a red $F_n$, which in turn shows that
there is a monochromatic $F_n$. So we assume $d_B(w) \le 3n -1$. By the minimality of $|C_1|$, we have the following.
\[
|C_1| \le  \frac{d_B(w) -|S|}{\ell} \le \frac{3n-1}{(m+c+1)/2}<\frac{3n}{m/2}=\frac{6n}{m}.
\]
Thus, $|C_1| \le \left\lfloor {6n}/{m}\right\rfloor$ and
\begin{align}
|D| & =    d_B(w) - |S| - |C_1| \nonumber \\
& \ge  2n+\frac{m+c-1}{2}- \left(\ell-\frac{m+c+1}{2}\right) -\left\lfloor\frac{6n}{m}\right\rfloor  \nonumber \\\
& = m+2n-\ell+1 \quad (\text{as } c=\left\lfloor {6n}/{m}\right\rfloor+1) \nonumber \\\
& \ge 2n +2. \label{eq:D}
\end{align}

For every $i\in [\ell]$, fix an arbitrary vertex $v_i\in C_i$. Let $X=\{v_2,v_3,\ldots,v_\ell\}$. Note that
$X\subseteq D$  and its vertices form a red clique,  and $v_1$ is red-adjacent to all vertices in $D$.

Let $D^*:= D\setminus X$. Then $|D^*| = |D| -(\ell-1) \ge  m +2n -2\ell +2$. We claim that $D^*$ contains a red matching of size at least  $n -\ell +2$. Otherwise, by removing the vertices of a largest red matching in $D^*$, we get
a blue clique $Z$  in $G[D^*]$  with   $|Z| \ge |D^*| - 2\nu(\overline{G}[D^*]) \ge m +2n -2\ell +2 - 2(n -\ell +1) = m$. So, $Z$ induces  a blue $K_{m}$, giving a contradiction. Let $M$ be a red matching in $\overline{G}[D^*]$ with $|M| \ge n -\ell +2$ and
let $Y := D^* - V(M)$.

Recall from (b) that $v_1$ is red-adjacent to all vertices in $D$. We will show that there is a red matching of size at least $n$ in $D$,  which gives a red $F_n$ with center $v_1$. Since $v_2$, $v_3$, $\dots$, $v_{\ell}$ are in different components of $G-S$, every vertex in $Y$ is red-adjacent to at least $|X|-1$ vertices in $X$. Hence we can greedily find a red matching $M'$ of size at least $\min\{|Y|,|X|-1\}$ between $X$ and $Y$. If $|M'|= |Y|$, then $M'\cup M$ saturates all the vertices in $D^*$.
Since $R[X]$ is a red complete graph, the vertices in $D= D^* \cup X$ contains a red matching of size at least $\left\lfloor {|D|} / {2}\right\rfloor\ge n$ by \eqref{eq:D}. If $|M'|\ge |X|-1$, then $|M'\cup M|\ge |X|-1+(n-\ell+2)= \ell -2 +(n -\ell +2) = n $.
In either case, we find a red matching of size at least $n$ in $D$, as desired. 
\qed
\\

\noindent {\bf Proof of Lemma \ref{fan}.}
Suppose to the contrary that neither $G$ nor $\overline{G}$ contains a  copy of $F_n$. We make the following observation:
\begin{align}\label{eq:V_0}
  \text{For every $v\in V_0$, there is no matching $M$ in $G[N(v)]$ such that $|V(M)\backslash V_0|\ge \left\lfloor \frac{n}{2}\right\rfloor$.}
\end{align}

Otherwise, there are $v\in V_0$ and a matching $M$ in $G[N(v)]$ such that $|V(M)\backslash V_0|\ge \left\lfloor{n}/{2}\right\rfloor$.
Since $V_0$ is a clique, $M$ can be extended to a matching $M^*$ containing all vertices in $V(M)\cup V_0\backslash\{v\}$ if $|V(M)\cup V_0\backslash\{v\}|$ is even and all but one vertex in $V(M)\cup V_0\backslash\{v\}$ if $|V(M)\cup V_0\backslash\{v\}|$ is odd.
Since $|V_0|\ge  \lceil 3n/2 \rceil +1$, it follows that $M^*$ is a matching $M$ in $G[N(v)]$ of size
 \[
 \left\lfloor\frac{|V(M)\cup V_0\backslash\{v\}|}{2}\right\rfloor \ge  \left\lfloor\frac{\left\lfloor{n}/{2}\right\rfloor + \left\lceil{3n}/{2}\right\rceil}{2}\right\rfloor= n,
\]
 which in turn gives an $F_n$ centered at $v$,  a contradiction.

In the rest of the proof, we will find disjoint subsets $S_{v_1}, S_{v_2}, \ldots,S_{v_{t}}$ of $V\backslash V_0$ for some $t>3$ and a vertex $w\in V_0$ such that  $\overline{G}[\cup_{1\le i\le t}S_{v_i}\cup\{w\}]$ contains a subgraph isomorphic to $F_n$.
For this goal, we first prove the following claim.

\begin{claim}\label{claim1}
For every vertex $v\in V_0$, there exists an independent set $S_v\subseteq N(v)\backslash V_0$ such that $|S_v|\ge |N(S_v)\cap V_0|+{n}/{2}$ and $|N(S_v)\cap V_0|\le {n}/{2}$.
\end{claim}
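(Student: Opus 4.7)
The plan is to fix $v\in V_0$, work inside $W_v:=N_G(v)\setminus V_0$ (which has size at least $n$ by hypothesis) and $U:=V_0\setminus\{v\}$, and produce $S_v$ as an independent set obtained by removing a König-type vertex cover from a large independent subset of $W_v$. Throughout I will use observation \eqref{eq:V_0} in the equivalent form: every matching $M$ in $G[N(v)]$ satisfies $|V(M)\setminus V_0|\le n':=\lfloor n/2\rfloor-1$. The first step is to pick a maximum matching $M_0$ in $G[W_v]$; since $V(M_0)\cap V_0=\emptyset$, applying \eqref{eq:V_0} to $M_0$ forces $2|M_0|\le n'$, and by maximality $I_0:=W_v\setminus V(M_0)$ is $G$-independent of size $|W_v|-2|M_0|\ge n-2|M_0|$.

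The second step is to consider the bipartite graph $B_0$ between $I_0$ and $U$ formed by the $G$-edges joining them. Any matching $M_1$ of $B_0$ is vertex-disjoint from $M_0$, so $M_0\cup M_1$ is a matching in $G[N(v)]$ using $2|M_0|+|M_1|$ vertices outside $V_0$, and \eqref{eq:V_0} then gives $\nu(B_0)\le n'-2|M_0|$. Applying König's theorem (equivalent to Theorem~\ref{hall}) to the bipartite graph $B_0$ supplies a vertex cover $C=C_I\cup C_U$, with $C_I\subseteq I_0$, $C_U\subseteq U$, and $|C_I|+|C_U|\le n'-2|M_0|$. I will then set $S_v:=I_0\setminus C_I$, so that $S_v$ is automatically $G$-independent and contained in $N(v)\setminus V_0$.

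Verifying the two required inequalities is then routine bookkeeping. Since $C$ covers all edges of $B_0$, $N_G(S_v)\cap U\subseteq C_U$, and because $v\in N_G(S_v)$ we obtain $|N(S_v)\cap V_0|\le|C_U|+1\le n'+1\le n/2$. From $|S_v|=|I_0|-|C_I|$ and the single cover bound,
\[
|S_v|-|N(S_v)\cap V_0|\ \ge\ |I_0|-(|C_I|+|C_U|)-1\ \ge\ n-n'-1\ =\ \lceil n/2\rceil\ \ge\ n/2.
\]
I expect the main (but mild) obstacle to be precisely this last step: the single inequality $|C_I|+|C_U|\le n'-2|M_0|$ must simultaneously yield the upper bound on $|N(S_v)\cap V_0|$ and the lower bound on $|S_v|-|N(S_v)\cap V_0|$. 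This works because the ``$-2|M_0|$'' slack in the cover bound precisely cancels the ``$-2|M_0|$'' deficit in $|I_0|$, so pushing $|M_0|$ up only makes both estimates sharper.
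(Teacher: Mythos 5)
Your proof is correct and follows essentially the same route as the paper: both take a maximum matching $M_0$ in $G[N(v)\setminus V_0]$, observe via \eqref{eq:V_0} that any matching from the leftover independent set into $V_0\setminus\{v\}$ has size at most $\lfloor n/2\rfloor-1-2|M_0|$, and then extract $S_v$ from the resulting bipartite deficiency. The only difference is cosmetic: you phrase the extraction via K\H{o}nig's minimum vertex cover, while the paper uses the defect form of Hall's theorem (Theorem~\ref{hall}) to get the deficient set directly; these are dual statements and yield the same bounds.
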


\pf
Let $v$ be a vertex in $V_0$ and $M_v$ be a largest matching in $G[N(v)\setminus V_0]$. Let $m:=|M_v|$.
Then $N(v)\setminus (V_0 \cup V(M_v))$ is an independent set.
Since $v$ has at least $n$ neighbors in $V\backslash V_0$, we have $| N(v)\setminus (V_0 \cup V(M_v)) | \ge n - 2m$.
Let $Z_v\subseteq N(v)\setminus (V_0 \cup V(M_v)) $ with $|Z_v|=n- 2m$.
If there is a matching $M'$ between $Z_v$ and $V_0\backslash\{v\}$ with $|M'| \ge \left\lfloor{n}/{2}\right\rfloor- 2m$, then $M := M'\cup M_v$ is a matching
with $|V(M)\backslash V_0|\ge \left\lfloor{n}/{2}\right\rfloor$, contradicting \eqref{eq:V_0}. Thus there is no matching of size $\left\lfloor{n}/{2}\right\rfloor - 2m =|Z_v|-\left\lceil{n}/{2}\right\rceil$ between $Z_v$ and $V_0\backslash\{v\}$. Applying Theorem \ref{hall} on $G\left[Z_v, V_0\backslash\{v\}\right]$ by taking
\[
X:=Z_v,\quad Y:=V_0\backslash\{v\} \quad\mbox{and}\quad d:=\left\lceil{n}/{2}\right\rceil,
\]
we get a subset $S_v\subseteq Z_v$ (thus $S_v$ is independent) such that
\[
|N(S_v)\cap V_0\backslash\{v\}|\le |S_v|-d-1.
\]
This implies that $|S_v|\ge |N(S_v)\cap V_0\backslash\{v\}|+1 + d\ge |N(S_v)\cap V_0|+{n}/{2}$ and
\[|N(S_v)\cap V_0|= |N(S_v)\cap V_0\backslash\{v\}|+1\le |S_v|-d \le |Z_v| - d\le {n}/{2}.
\]
This proves the claim.
\qed
\\

For every $v\in V_0$, let $S_v$ be the subset of $N(v)\backslash V_0$  defined in Claim~\ref{claim1}.
\begin{itemize}
\item Let  $v_1\in V_0$ such that  $|N(S_{v_1})\cap V_0|$ is the maximum among all vertices in $V_0$. Let $V_1:=V_0\backslash N(S_{v_1})$. By definition, every vertex in $V_1$ is not adjacent to any vertex in $S_{v_1}$.


\item For each $i \ge 1$, if $V_{i-1}\backslash N(S_{v_i}) \ne \emptyset$,  then define $V_i:=V_{i-1}\backslash N(S_{v_i})$ and choose $v_{i+1}\in V_i$ such that $|N(S_{v_{i+1}})\cap V_i|$ is the maximum among all vertices in $V_i$.
Note that $N(S_{v_{i+1}})\cap V_i\ne \emptyset$ because $v_{i+1}\in N(S_{v_{i+1}})\cap V_i$.  Together with the choice of $v_i$, we derive that
\begin{align}\label{eq:N'}
   0< |N(S_{v_{i+1}})\cap V_i|\le |N(S_{v_{i+1}})\cap V_{i-1}|\le |N(S_{v_{i}})\cap V_{i-1}|.
\end{align}
\end{itemize}
For simplicity, let $N'(S_{v_{i+1}}):=N(S_{v_{i+1}})\cap V_i$. By definition, $N'(S_{v_{1}})$, $N'(S_{v_{2}})$, $\dots$ are nonempty and pairwise disjoint.
Suppose the above process stops when $i= t$  due to $V_{t-1}\backslash N(S_{v_{t}}) = \emptyset$.
Then
\begin{align} \label{eq:UN}
\bigcup_{1\le i\le t}N'(S_{v_i})= V_0 \quad  \text{and} \quad \bigcup_{1\le i< t} N'(S_{v_i}) \subsetneq V_0.
\end{align}
By Claim \ref{claim1}, \eqref{eq:N'}, and the choice of $v_i$, we have
\begin{itemize}
  \item [(i)] $  |N'(S_{v_{t}})| \le  |N'(S_{v_{t-1}})| \le \cdots \le  |N'(S_{v_1}) | \le {n}/{2}$;
  \item [(ii)] $S_{v_1}, S_{v_2}, \dots, S_{v_t}$ are disjoint independent sets such that $|S_{v_i}|\ge |N'(S_{v_i})|+{n}/{2}$ for all $i\in [t]$;
  \item [(iii)] every vertex in $V_i$ is not adjacent to any vertex in $\bigcup_{1\le j\le i}S_{v_j}$ for all $i\in [t]$.
\end{itemize}
By \eqref{eq:UN} and (i), we have
\[
\frac{\sum_{i=1}^{t-1} | N'(S_{v_i})| }{t-1}\ge \frac{\sum_{i=1}^{t} |N'(S_{v_i})|}{t} = \frac{|V_0|}t
\quad \text{and}\quad
t\ge \frac{|V_0|}{|N'(S_{v_1})|} >  \frac{3n/2}{ n/2}= 3.
\]
It follows that
\begin{align*}
\sum_{i=1}^{t-1} |N'(S_{v_i})| & \ge |V_0|\cdot \frac{t-1}{t} \ge \frac{3n}2 \cdot \frac 23 = n.
\end{align*}
By (ii) and the fact that $t\ge 3$, we have
\[
\sum_{i=1}^{t-1} |S_{v_i} | \ge \sum_{i=1}^{t-1} \left( |N'(S_{v_i})| + \frac n2 \right) \ge  n + \frac{n}2\cdot 2 = 2n.
\]

Since all $S_{v_i}$ are independent sets, we obtain a matching $M'$ of size $n$ in $\overline{G} \left[\bigcup_{i=1}^{t-1}S_{v_i} \right]$.
Since $\bigcup_{i=1}^{t-1}N(S_{v_i}) \subsetneq V_0$, there is a vertex $w\in V_0\setminus \bigcup_{i=1}^{t-1}N(S_{v_i})$. By (iii), $w$ is not adjacent to any vertex in $\bigcup_{i=1}^{t-1} S_{v_i}$. Therefore, $V(M')\cup\{w\}$ spans a fan $F_n$ in $\overline{G}$.
\qed

\section{Proof of Theorem \ref{maintheorem}}



\subsection{Lower bound}\label{lowbound}

Let $n$ be a positive integer and let $t$ be  the largest even number less than ${3n}/{2}$. Thus $t\ge 3n/2 - 2$.
We construct a graph $G=(V,E)$ on $3t$ vertices as follows.  Let $V_1\cup V_2\cup V_3$  be a partition of  $V$ such that $|V_1| = |V_2| = |V_3| = t$ and all $G[V_i]$ are complete graphs.
For each $i \in [3]$,  further partition $V_i$ into two subsets  $X_i$ and $Y_i$ with $|X_i| = |Y_i| = t/2$,  and add edges between $X_i$ and $Y_{i+1}$ such that $G[X_i, Y_{i+1}]$ is an $\left\lceil\frac{n}{2}\right\rceil$-regular bipartite graph, where we assume  $Y_4=Y_1$.  The graph $G$ is depicted in Figure~\ref{fig1}.

Observe that $G$ does not contain a copy of $F_n$ because every vertex has degree $\left\lceil{n}/{2}\right\rceil+t-1< 2n$.
To see that   $\overline{G}$ contains no  copy of $F_n$, we note that $\overline{G}$ is 3-partite because $V_1, V_2, V_3$ induce cliques in $G$. Thus $\overline{G}$ induces a bipartite graph on $N_{\overline{G}}(v)$ for every vertex $v\in V$.  Furthermore, two parts of this bipartite graph have sizes $t$ and $t- \left\lceil{n}/{2}\right\rceil< n$ and thus there is no matching of size $n$ in $\overline{G}[N_{\overline{G}}(v)]$. Consequently $\overline{G}$ contains no copy of $F_n$.

Since neither $G$ nor $\overline{G}$ contains a copy of $F_n$, we have $r(F_n)\ge |V|+1= 3t+1\ge {9n}/{2}-5$.

%
%
%
%
%
%
%
%

\begin{figure}[htbp]
 \begin{center}
\scalebox{0.07}[0.07]{\includegraphics {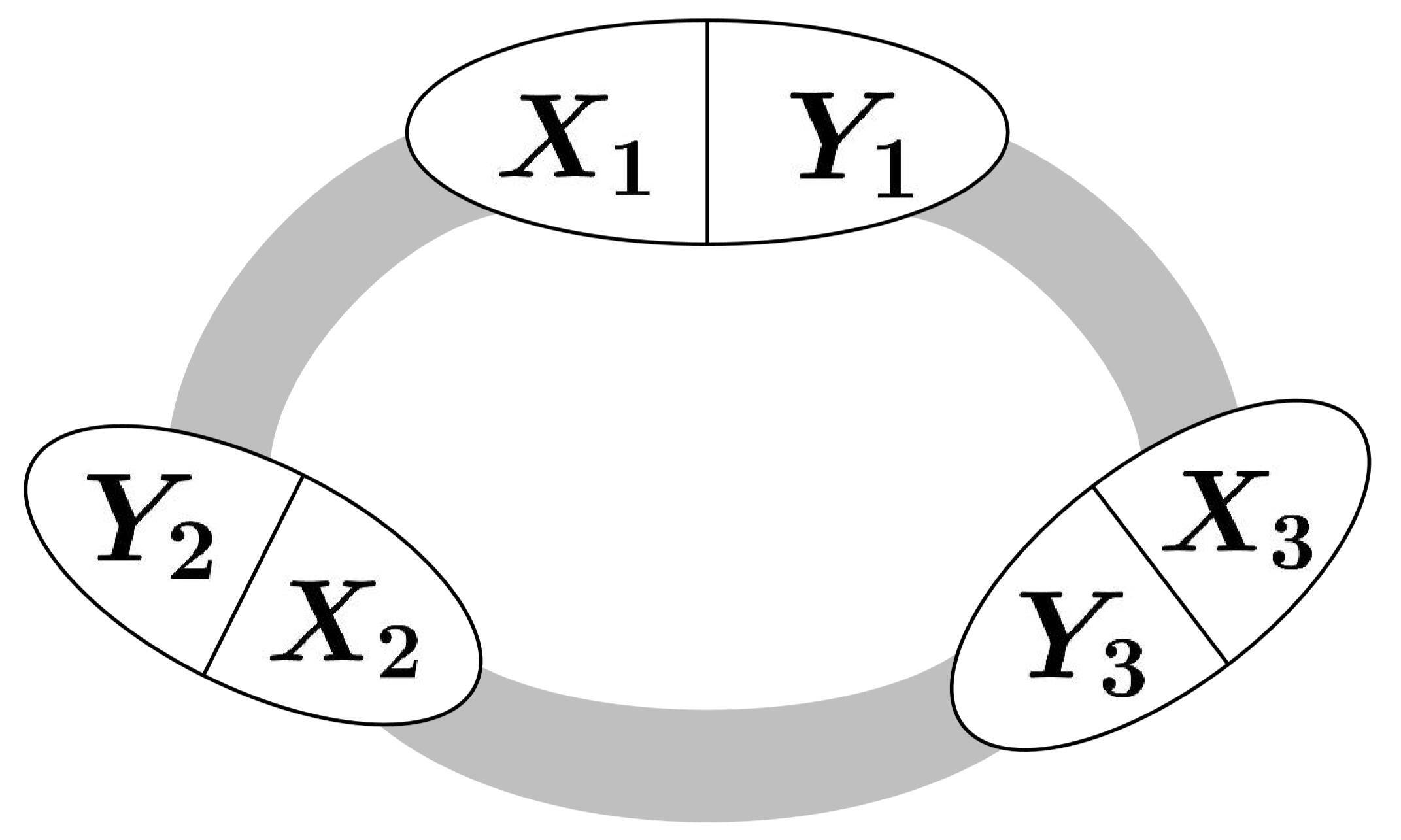}}
 \end{center}
 \caption{Illustration of $G$} \label{fig1}
\end{figure}

\subsection{Upper bound}

Given a red-blue edge coloring of a complete graph on $N=\left\lceil{11n}/{2}\right\rceil+5$, let $R,B$ be the graphs induced by the red and blue edges, respectively.
If there is a vertex $v$ with $|N_R(v)|\ge 3n$ or $|N_B(v)|\ge 3n$, then there is a monochromatic $F_n$ by Corollary \ref{3n}. We thus assume that $|N_R(v)|\le 3n-1$ and $|N_B(v)|\le 3n-1$ for all vertices $v$. Because $R$ and $B$ are complementary to each other, it follows that $d_R(v),d_B(v)\ge (N-1)-(3n-1)=N-3n$.  Define $m:=N-4n-4=\left\lceil{3n}/{2}\right\rceil+1$. Since
\[
\frac{6n}{m} =\frac{6n}{\left\lceil{3n}/{2}\right\rceil+1}< \frac{6n}{{3n}/{2}}=4,
\]
we have $\left\lfloor\frac{6n}{m}\right\rfloor\le 3$. So $4n+m+\left\lfloor\frac{6n}{m}\right\rfloor+1\le N$.
By Lemma \ref{maintheorem2},
there exists a monochromatic $F_n$ or a monochromatic $K_m$ in $K_N$. If there exists a  monochromatic $F_n$, we are done. Otherwise, assume there is a monochromatic $K_m$. Without loss of generality, suppose that $K_m$ is blue. Let $V_0$ be the blue clique of order $m$. For every $v\in V_0$, $v$ has at least $d_B(v)-(m-1)\ge (N-3n)-(N-4n-5) =n+5>n$ neighbors in $V(B)\backslash V_0$. Applying Lemma \ref{fan} with $G:=B$, we get a monochromatic $F_n$. Thus  $r(F_n)\le N\le {11n}/{2}+6$.
\qed

\section{Concluding remarks}
\label{sec:last}

Theorem \ref{maintheorem} contains upper and lower bounds for $r(F_n)$ that differ by about $n$. We do not have a conjecture on the value of $r(F_n)$ but speculate that the lower bound is closer to the truth.

As mentioned in Section~1, we believe that $r(F_n)\le r(n K_3)=5n$. Although we are unable to verify this, there is some evidence for this assertion. First, $r(F_2)=9< 10= r(2 K_3)$. Second, let $t, n$ be positive integers such that $t$ divides $n$. One way of proving $r(F_n)\le r(n K_3)$ is showing that $r( \frac{n}t F_t)\le r(n K_3)$ for all such $t$. Indeed, Burr, Erd\H{o}s and Spencer \cite{Burr1975} proved the following theorem.
\begin{theorem}\emph{(\cite[Theorem 1]{Burr1975})} \label{thm:BES}
Let $n$ be a positive integer and $G$ be a graph of order $k$ and independence number $i$.
Then there exists a constant $C= C_G$ such that
\[
(2k-i) n - 1 \le r(n G)\le (2k-i) n + C.
\]
\end{theorem}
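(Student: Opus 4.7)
I would prove Theorem~\ref{thm:BES} by treating the two bounds separately.

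For the lower bound I would exhibit an explicit 2-edge-colouring of $K_{(2k-i)n-2}$ containing no monochromatic $nG$. Partition the vertex set into $A$ and $B$ with $|A|=(k-i)n-1$ and $|B|=kn-1$; colour every edge inside $A$ and every edge between $A$ and $B$ red, and every edge inside $B$ blue. Any blue copy of $G$ must lie entirely in $B$ (no blue edge touches $A$), so there are fewer than $|B|/k<n$ vertex-disjoint blue copies. Any red copy of $G$ meets $B$ in an independent set of $G$ (because $B$ spans a blue clique), hence uses at most $i$ vertices of $B$ and at least $k-i$ vertices of $A$; with $|A|<(k-i)n$ one cannot fit $n$ vertex-disjoint red copies, and the lower bound follows.

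For the upper bound let $R:=r(G,G)$ and let $C=C_G$ be a sufficiently large constant (depending only on $G$). I would induct on $n$; the base $n=1$ reduces to $r(G)\le R\le(2k-i)+C$. For the inductive step, given a 2-colouring of $K_N$ with $N=(2k-i)n+C$, the inductive hypothesis applied to $K_N$ itself yields $n-1$ pairwise vertex-disjoint monochromatic copies of $G$ of a common colour, say red copies $H_1,\dots,H_{n-1}$ on a vertex set $V_0$ of size $k(n-1)$. Set $W:=V\setminus V_0$, so $|W|=(k-i)n+k+C$. If $W$ contains a further red copy of $G$ we are done, so assume $W$ is red-$G$-free; since $|W|\ge R$, Ramsey's theorem guarantees blue copies of $G$ inside $W$, and I would extract a maximum blue packing of size $b$ inside $W$, leaving a residue $W'$ of size less than $R$. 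If $b\ge n$ we are already finished with blue.

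Otherwise I would execute an augmentation step: for each $j\in\{1,\dots,n-1\}$ fix an independent set $I_j\subseteq V(H_j)$ of size $i$, and look for monochromatic copies of $G$ inside $I_j\cup W'$ that extend one of the two packings while remaining disjoint from the rest. The gain per successful swap is that the $i$ vertices of $I_j$ are effectively reused: either a new blue $G$ appears on $I_j\cup W'$ (bumping $b$ up by one), or a new red configuration on $(V(H_j)\setminus I_j)\cup W'$ replaces $H_j$ and frees $I_j$ into the residue, enlarging the red count. A careful accounting with $C$ chosen large in terms of $R$, $k$, and $i$ shows that enough such swaps can be executed to push one of the two packings up to $n$, which yields exactly the coefficient $2k-i$ rather than the naive $2k$. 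The main obstacle is making this swap argument rigorous: plain greedy extraction alone gives only $r(nG)\le 2kn+O_G(1)$, and squeezing out the extra $-in$ forces one to genuinely recycle vertices already committed to $\bigcup_j I_j$ while keeping the resulting packing vertex-disjoint. The combinatorial bookkeeping that guarantees each attempted swap produces a \emph{new} disjoint monochromatic $G$ (rather than colliding with an existing one), and the tuning of $C_G$ to absorb both the Ramsey-threshold slack $R$ and the per-swap overhead when a given $j$ fails, is the delicate part of the proof.
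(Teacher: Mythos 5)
First, a point of order: the paper contains no proof of this statement. Theorem~\ref{thm:BES} is quoted from Burr, Erd\H{o}s and Spencer \cite{Burr1975} and used as a black box in Section~\ref{sec:last}, so your argument can only be judged on its own merits. Your lower bound stands: the colouring with $|A|=(k-i)n-1$, $|B|=kn-1$, red inside $A$ and between $A$ and $B$, blue inside $B$, is exactly the Burr--Erd\H{o}s--Spencer construction, and both of your counting observations are correct. One caveat: you need $G$ to have no isolated vertices (a hypothesis explicit in \cite{Burr1975} but dropped from the statement above), since an isolated vertex of a ``blue'' copy of $G$ could be placed in $A$. This is harmless here, as the theorem is only applied to fans.

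Your upper bound, however, has a genuine gap, and it sits exactly where you say the delicate part is. Two concrete problems with the augmentation step. (1) You only know $|W'|<R$, so $|I_j\cup W'|=i+|W'|$ may well be smaller than $R=r(G)$, in which case there is no guarantee that $I_j\cup W'$ contains \emph{any} monochromatic copy of $G$; the swap you describe then simply has nothing to act on. (2) Even when a monochromatic copy of $G$ does appear in $I_j\cup W'$, it is not clear that either packing grows: a blue copy meeting $I_j$ is not disjoint from $H_j$, so adding it to the blue packing forces you to delete $H_j$ from the red packing, and the quantity $b+(n-1)$ need not increase; a red copy meeting $I_j$ likewise collides with $H_j$ and merely replaces it. You have not exhibited a potential function that provably increases under a swap, so the claim that ``enough such swaps can be executed'' is unsupported.

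The argument in \cite{Burr1975} is organized differently and avoids this difficulty. The key structural object is a red copy of $G$ and a blue copy of $G$ whose intersection is an independent set of size $i$ common to both, i.e.\ a configuration on exactly $2k-i$ vertices containing a copy of $G$ in each colour. One shows that every $2$-colouring of a sufficiently large complete graph either contains such a configuration or is so unbalanced that a monochromatic $nG$ can be found directly; in the former case one deletes the $2k-i$ vertices, applies induction to obtain a monochromatic $(n-1)G$ in the remainder, and completes it with the deleted copy of the matching colour. This is precisely how the savings of $in$ over the naive $2kn$ bound is realized --- the two colours share the independent set up front, rather than being reconciled by a posteriori swaps. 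If you want a complete proof, I would restructure your induction around that lemma.
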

We can apply Theorem~\ref{thm:BES} with $G= F_t$ (thus $k= 2t+1$ and $i=t$) and obtain that $ (3t + 2) \frac{n}t - 1 \le r( \frac{n}t F_t) \le (3t + 2) \frac{n}t + C$ for some $C$ depending only on $F_t$. For fixed $t\ge 2$, this implies that  $r( \frac{n}t F_t)= \left(3 + \frac{2}t \right) n + O(1)$, much smaller than $r(n K_3)$.\footnote{The proof of \cite[Theorem 1]{Burr1975} shows that $C$ is double exponential in $t$ and thus $r( \frac{n}t F_t)=  \left(3 + \frac{2}t \right) n+ o(n)$ whenever $t=o(\log\log n)$.}

\medskip
We now give a construction that shows Lemma \ref{fan} is best possible with respect to $|V_0|$. Suppose $n$ is even.
Let $G=(V, E)$ be a graph on $9n/2-2$ vertices that contains a clique $V_0$ of order $3n/2$, and $V_0$ is partitioned into $V_1\cup V_2\cup V_3$ such that $|V_1|= |V_2| = |V_3|= n/2$. The set  $V\backslash V_0$ is independent and is partitioned into $U_1\cup U_2\cup U_3\cup\{x_0\}$ with $|U_1|= |U_2| = |U_3|= n-1$. For every $i\in [3]$, $G[V_i, U_i]$ is complete but $G[V_i, U_j]$ is empty for distinct $i,j\in [3]$. In addition, all the vertices of $V_0$ are adjacent to $x_0$. Then each $v\in V_0$ has exactly $n$ neighbors in $V\setminus V_0$. But neither $G$ or $\overline{G}$ contains an $F_n$ centered in $V_0$ (there are copies of $F_n$ whose centers are outside $V_0$ in  $\overline{G}$). Indeed, for $v\in V_0$, every matching $M$ in $G[N_G(v)]$ contains at most $n/2$ vertices in $V\setminus V_0$ and thus $|V(M)|\le  |V_0| -1 + n/2 < 2n$. In $\overline{G}$, every $v\in V_0$ has exactly $2n-2$ neighbors so there is no matching of order $2n$ in $\overline{G}[N_{\overline{G}}(v)]$.
\medskip
We can generalize the construction that gives the lower bound of Theorem \ref{maintheorem} and obtain a new lower bound for $r(F_n,F_m)$. When $m\le n<{3m}/{2}-7$, our bound is better than $r(F_n, F_m)\ge 4n + 2$ given in \cite{ZBC2015}.
\begin{theorem}
Let $m,n$ be positive integers with $m\le n\le \frac{3m}{2}-3$. We have
\[
r(F_n,F_m)\ge \frac{3m}{2}+3n-5.
\]
\end{theorem}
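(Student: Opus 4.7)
The plan is to mimic the construction in Section~\ref{lowbound}, tuning the regularity of the bipartite subgraphs so that $G$ avoids $F_n$ while $\overline{G}$ avoids $F_m$. Let $t$ be the largest even integer with $t\le n+(m-1)/2$, and set $d:=t-m+1$. A short check on the two parities of $m$ shows $t\ge n+m/2-2$, hence $3t+1\ge 3n+3m/2-5$. I would then construct $G$ on $3t$ vertices with three classes $V_1,V_2,V_3$ of size $t$, each inducing a clique in $G$, partition $V_i=X_i\cup Y_i$ with $|X_i|=|Y_i|=t/2$, and for each $i\in[3]$ add a $d$-regular bipartite graph between $X_i$ and $Y_{i+1}$ (indices mod~$3$).

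Before the main argument, I would verify that the construction is legal. The hypothesis $n\le 3m/2-3$ gives $t\le 2m-4$, hence $d\le t/2$, so a $d$-regular bipartite graph on parts of size $t/2$ exists; the choice of $t$ also gives $d\le 2n-t$, and $d\ge 0$ since $t\ge m-1$ (because $t\ge n+m/2-2\ge 3m/2-2$ under $n\ge m$).

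It then remains to verify the two structural properties. For $G$, every vertex has degree $(t-1)+d=2t-m\le 2n-1$, so $G[N_G(v)]$ cannot contain a matching of size $n$, and $G$ has no $F_n$. For $\overline{G}$, each $V_i$ being a clique in $G$ makes $\overline{G}$ tripartite, and a brief split on whether $v\in X_i$ or $v\in Y_i$ shows that $\overline{G}[N_{\overline{G}}(v)]$ is bipartite with parts of sizes $t$ and $t-d=m-1$, so it contains no matching of size $m$, and $\overline{G}$ has no $F_m$. Together these yield $r(F_n,F_m)\ge 3t+1\ge 3n+3m/2-5$. I expect no substantive obstacle here: the construction is a direct off-diagonal adaptation of the diagonal lower bound, and the only modest point of care is the parity bookkeeping to conclude $t\ge n+m/2-2$, which is a brief case analysis on whether $m$ is odd or even.
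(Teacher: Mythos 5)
Your proposal is correct and is essentially the paper's own construction: three cliques of even size $t\approx n+m/2$ with a cyclic pattern of regular bipartite graphs between the halves, with $G$ avoiding $F_n$ by a degree count and $\overline{G}$ avoiding $F_m$ because each neighborhood induces a bipartite graph with a part of size less than $m$. The only (immaterial) difference is your choice of regularity $d=t-m+1$ versus the paper's $\lceil n-m/2\rceil$; both satisfy the needed inequalities, so the arguments coincide.
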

\begin{proof}
We construct a graph $G=(V, E)$ on $3t$ vertices, where $t$ is the largest even number less than $\frac{m}{2}+n$. Thus $t\ge \frac{m}{2}+n-2$. Our goal is to show that neither $G$ contains $F_n$ nor $\overline{G}$ contains $F_m$. This will imply that $r(F_n,F_m)\ge 3t+1\ge 3m/2+3n-5$ as desired.

Let $V_1\cup V_2\cup V_3$ be a partition of $V$ such that  $|V_1|=|V_2|=|V_3|=t$ and all $G[V_i]$ are complete graphs. For every $i\in[3]$, partition $V_i$ into two subsets $X_i$ and $Y_i$ with $|X_i|=|Y_i|=t/2$. Observe that
\[
\frac{t}{2}-\left\lceil n-\frac{m}{2}\right\rceil\ge \frac{m}{4}+\frac{n}{2}-1-\left( n-\frac{m}{2} + \frac12 \right)=\frac{3m}{4}-\frac{n}{2}- \frac32\ge 0
\quad\mbox{as $n\le \frac{3m}{2}-3$}.
\]
For every $i\in[3]$, we add edges between $X_i$ and $Y_{i+1}$  (assuming $Y_4=Y_1$) such that $G[X_i, Y_{i+1}]$ is an $\left\lceil n-\frac{m}{2}\right\rceil$-regular bipartite graph.

The graph $G$ contains no $F_n$ because for every vertex $v\in V$,
\[
d_G(v)\le t-1+ \left\lceil n-\frac{m}{2}\right\rceil <\frac{m}{2}+n-1+n-\frac{m}{2} + \frac12 < 2n \quad \text{as } t<\frac{m}2+n.
\]
For every $v\in V$, $\overline{G}$ induces a bipartite graph on $N_{\overline{G}}(v)$ with one part of size
\[
t-\left\lceil n-\frac{m}{2}\right\rceil<\frac{m}{2}+n-\left(n-\frac{m}{2}\right)=m.
\]
It follows that $\overline{G}$ contains no $F_m$.
\end{proof}


\section*{Acknowledgement}
We thank anonymous referees for their comments that improved the presentation of this paper.

\end{document}